\newtheorem{theorem}{Theorem}    
\newtheorem{corollary}{Corollary}
\theoremstyle{definition}
\newtheorem*{remark*}{Remark}
\newcommand{\Z}{\mathbb{Z}}
\title[Orientation-reversing distance one surgeries]{A note on orientation-reversing distance one surgeries on non-null-homologous knots}
\author{Tetsuya Ito}
\begin{document}

\begin{abstract}
We show that there are no distance one surgeries on non-null-homologous knots in $M$ that yield $-M$ ($M$ with opposite orientation) if $M$ is a 3-manifold obtained by a Dehn surgery on a knot $K$ in $S^{3}$, such that the order of its first homology is divisible by $9$ but is not divisible by $27$. 

As an application, we show several knots, including the $(2,9)$ torus knot, do not have chirally cosmetic bandings. This simplifies a proof of the result first proven by Yang; the $(2,k)$ torus knot $(k>1)$ has a chirally cosmetic banding if and only if $k=5$.
\end{abstract} 

\maketitle


The Dehn surgery on a knot $K$ in a 3-manifold $M$ along the slope $\alpha$ is called \emph{distance one surgery} if $\Delta(\alpha,\mu_K)=1$. Here $\mu_K$ is the meridian of $K$ and $\Delta$ denotes the distance of two slopes, i.e., the minimum geometric intersection number. When $M=S^{3}$, distance one Dehn 
surgeries are usually called the integral surgeries.

For a $\Z/2$-homology sphere $Y$, let $\mathfrak{t}_Y$ be the self-conjugate Spin${}^{c}$ structure of $Y$ and $d(Y,\mathfrak{t}_Y)$ be the $d$-invariant (correction term) of $Y$ \cite{os}.

The aim of this note is to give the following non-existence result on distance one surgeries that reverses the orientation.

\begin{theorem}
\label{theorem:main}
Let $Y$ be an L-space obtained by Dehn surgery on a knot in $S^{3}$, and that $|H_1(Y;\Z)|= 9p_0$ with $p_0\not \equiv 0 \pmod{3}$. If $d(Y,\mathfrak{t}_Y) \neq 0,\pm 1$, then there are no distance one surgeries between $Y$ and $-Y$. 
\end{theorem}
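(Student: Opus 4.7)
I would argue by contradiction. Suppose a distance one surgery on a knot $K \subset Y$ along slope $\alpha$ satisfies $Y_\alpha(K) = -Y$, and let $W$ be the associated two-handle cobordism from $Y$ to $-Y$. Because $\Delta(\alpha,\mu_K)=1$, this cobordism has $H_2(W;\Z) \cong \Z$, $\chi(W)=1$, and intersection form $(n)$ for some rational $n$ determined by the framing of $\alpha$ and the rational self-linking of $K$ in $Y$. The dual knot $K^\ast \subset -Y$ furnishes a distance one surgery recovering $Y$, giving a parallel cobordism $W' \colon -Y \to Y$ of the same kind; viewing the orientation-reverse of $W'$ as a second cobordism $Y \to -Y$ will let me sandwich $d(Y,\mathfrak{t}_Y)$ from both sides.

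Next I would enumerate the Spin${}^c$ structures $\mathfrak{s}$ on $W$ whose restrictions to the two boundary components are the self-conjugate $\mathfrak{t}_Y$ and $\mathfrak{t}_{-Y}$. Since $H_1(Y;\Z) \cong \Z/(9p_0)$ has odd order, $\mathfrak{t}_Y$ is characterised by $c_1(\mathfrak{t}_Y)=0$, and the admissible extensions $\mathfrak{s}$ over $W$ form an affine subset of $\mathrm{Spin}^c(W)$ whose first Chern classes lie in an explicit arithmetic progression inside $H^2(W;\Z)\cong\Z$. The period of that progression is controlled modulo $9$ by the $3$-primary summand $\Z/9$ of $H_1(Y)$: the hypothesis $v_3(|H_1(Y)|)=2$ (neither $\leq 1$ nor $\geq 3$) is exactly what excludes the naive extension $c_1(\mathfrak{s})=0$ while still admitting a nearby extension with $|c_1(\mathfrak{s})^2|$ small and of a precisely computable form.

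Applying the Ozsv\'{a}th--Szab\'{o} $d$-invariant inequality
\[
d(-Y, \mathfrak{t}_{-Y}) - d(Y, \mathfrak{t}_Y) \;\geq\; \frac{c_1(\mathfrak{s})^2 - 2\chi(W) - 3\sigma(W)}{4}
\]
to an optimal extension on each of $W$ and the orientation-reverse of $W'$, and using $d(-Y,\mathfrak{t}_{-Y}) = -d(Y,\mathfrak{t}_Y)$, sandwiches $2d(Y,\mathfrak{t}_Y)$ between two explicit rationals determined by the $c_1^2$-minimum from the previous step. Since $Y$ is an L-space arising from surgery on a knot in $S^3$, the Ni--Wu surgery formula pins $d(Y,\mathfrak{t}_Y)$ down as a rational with denominator dividing $9p_0$; combined with the sandwich this leaves only $d(Y,\mathfrak{t}_Y) \in \{0,\pm 1\}$, contrary to hypothesis.

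\textbf{Main obstacle.} The crux is the Spin${}^c$ enumeration and the matching of the optimal $c_1^2$ with the L-space surgery formula. A mod-$3$ condition on $|H_1|$ would fail to exclude the $c_1=0$ extension, while a mod-$27$ condition would admit a smaller one; the precise threshold $v_3(|H_1|)=2$ is what makes the $d$-invariant bound sharp enough to cut the answer down to $\{0,\pm 1\}$. Everything after this extension analysis should be routine bookkeeping with the Ni--Wu formula.
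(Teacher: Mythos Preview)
Your approach diverges substantially from the paper's, and the step you yourself flag as the ``main obstacle'' is where the plan actually fails.

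The paper does not attempt a uniform cobordism/$d$-invariant argument. It bifurcates on whether the surgered knot $K\subset Y$ is null-homologous. For null-homologous $K$ it simply quotes Moore--Vazquez \cite[Theorem~3.5]{mv}, which (via the Ni--Wu surgery formula for null-homologous knots in an L-space) gives $|d(Y,\mathfrak{t}_Y)-d(-Y,\mathfrak{t}_{-Y})|\in\{0,2\}$; this part is close in spirit to your sandwich idea. For non-null-homologous $K$ the paper abandons Heegaard Floer entirely and proves a separate statement (Theorem~\ref{theorem:main2}) with the Casson--Walker invariant: one rewrites $-M$ as surgery on a two-component link $K_x\cup K_y\subset S^3$, applies the rational surgery formula for $\lambda$, and reduces modulo~$3$. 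The hypothesis $v_3(|H_1|)=2$ enters only here, as exactly the condition forcing $\ell_0=\ell^2/p\not\equiv 0\pmod 3$ (with $\ell=\mathrm{lk}(K_x,K_y)$), which yields the contradiction $0\equiv -\ell_0(q^2+1)\pmod 3$. No Spin$^c$ enumeration and no $d$-invariant inequality are used in this half.

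Your proposed Spin$^c$ enumeration does not go through in the non-null-homologous case as stated. The intersection form of the $2$-handle cobordism $W$ and the set of Spin$^c$ structures restricting to $(\mathfrak{t}_Y,\mathfrak{t}_{-Y})$ depend on the order of $[K]$ in $H_1(Y)$ and on the surgery framing, not merely on the $3$-primary part of $H_1(Y)$; the assertion that ``the period of that progression is controlled modulo $9$ by the $3$-primary summand $\Z/9$'' is unsupported, and I do not see a mechanism that makes it true uniformly over all non-null-homologous $K$ and all distance-one slopes. You also need $W$ (or its reverse) to be negative definite for the Ozsv\'ath--Szab\'o inequality to apply, which is a sign condition on the self-intersection that you have not checked and which can go either way. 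Without an actual computation of the admissible $c_1^2$ values the sandwich produces no usable bound, and the closing appeal to Ni--Wu does nothing further: that formula only tells you $d(Y,\mathfrak{t}_Y)$ lies in a fixed lattice, it does not by itself cut the possibilities to $\{0,\pm1\}$.
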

Here $-Y$ denotes the 3-manifold $Y$ with opposite orientation.

Our proof of Theorem \ref{theorem:main} has the same spirit as \cite{it-casson}. We show the non-existence of distance one surgery along non-null-homologous knot by using the Casson-Walker invariant.

\begin{theorem}
\label{theorem:main2}
Let $M$ be a 3-manifold which is obtained by a $\frac{p}{q}$-surgery on a knot $K$ in $S^{3}$ ($p,q>0$). If $p=9p_0$ with $p_0\not \equiv 0 \pmod{3}$, then there are no distance one surgeries on a non-null-homologous knot in $M$ that produce $-M$.
\end{theorem}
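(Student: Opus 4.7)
The plan is to mirror the Casson-Walker argument of \cite{it-casson}, exploiting the identity $\lambda_W(-M) = -\lambda_W(M)$ for the Casson-Walker invariant $\lambda_W$ under orientation reversal. Suppose for contradiction that $K' \subset M$ is a non-null-homologous knot and that $\alpha$ is a slope on $K'$ with $\Delta(\alpha,\mu_{K'}) = 1$ and $M_{K'}(\alpha) = -M$. Write $H_1(M;\Z) = \Z/p$ with $p = 9p_0$, and let $d > 1$ denote the order of $[K'] \in H_1(M;\Z)$; by hypothesis $d$ divides $p$. The equalities $|H_1(M_{K'}(\alpha))| = |H_1(-M)| = p$ together with $\Delta(\alpha,\mu_{K'})=1$ constrain the slope $\alpha$ in terms of the rational longitude of $K'$ and determine the form of the surgery modulo homological data.

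The core computation is the surgery formula for $\lambda_W$ under distance one surgery on a (possibly non-null-homologous) knot in a rational homology sphere. In the non-null-homologous case, the difference $\lambda_W(M_{K'}(\alpha)) - \lambda_W(M)$ is given by an explicit rational expression involving $p$, $d$, the slope coefficients, and an Alexander-polynomial contribution of $K'$ in $M$. The orientation-reversal identity then reduces to
\[
2\lambda_W(M) \;=\; -\bigl(\lambda_W(M_{K'}(\alpha)) - \lambda_W(M)\bigr),
\]
converting the geometric non-existence statement into an arithmetic identity relating $\lambda_W(M)$ to $p$, $d$, and Alexander data.

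Since $M = S^3_{p/q}(K)$, the Casson-Walker invariant $\lambda_W(M)$ may in turn be computed by Walker's rational surgery formula in terms of $p$, $q$, and the Alexander polynomial of $K$; in particular, $p \cdot \lambda_W(M)$ has a $3$-adic valuation that is controlled by $p/q$ and $K$. Substituting into the identity above and clearing denominators produces an integer congruence, which upon reducing modulo $9$ (using $\gcd(p_0,3)=1$) must be satisfied for some divisor $d$ of $p$ with $d > 1$. A case analysis on the possible $3$-adic valuations $v_3(d) \in \{0,1,2\}$ then shows the congruence fails in each case, yielding the desired contradiction with the existence of $K'$.

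The main obstacle will be the precise $3$-adic bookkeeping in the surgery formula: one must confirm that the Alexander-polynomial contributions of both $K$ in $S^3$ and $K'$ in $M$ have $3$-adic valuations compatible with detecting the obstruction modulo $9$, rather than diluting it. This is handled as in \cite{it-casson} by showing that the relevant combination is an integer (or has denominator coprime to $3$), so that the mod-$9$ reduction retains its force across all admissible values of $d$.
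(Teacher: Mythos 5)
Your high-level strategy --- the Casson--Walker invariant plus the antisymmetry $\lambda(-M)=-\lambda(M)$, ending in a $3$-adic congruence --- is indeed the one the paper uses, but the proposal has a genuine gap exactly at the step you yourself flag as ``the main obstacle.'' You invoke a surgery formula for $\lambda$ under distance one surgery on a \emph{non-null-homologous} knot $K'$ in a rational homology sphere, featuring ``an Alexander-polynomial contribution of $K'$ in $M$,'' and then assert that its denominators can be controlled ``as in \cite{it-casson}.'' No such control is available in that generality: for a non-null-homologous knot in $M$ the torsion/Alexander term in Walker's formula is not an integer with denominator prime to $3$ in any evident way, and nothing in your outline pins it down. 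The paper's key idea, which your proposal is missing, is to avoid any intrinsic surgery formula in $M$ altogether: since $M=S^3_{p/q}(K_y)$, one isotopes $K'$ into $S^3-N(K_y)$, so that $-M$ is presented as surgery on a two-component link $L=K_x\cup K_y$ in $S^3$ with framings $m\in\Z$ and $p/q$. One then applies the two-component link rational surgery formula of \cite[Theorem 1.5]{it-casson}, in which every knot/link contribution is one of $\sigma, a_2(K_x), a_2(K_y), 4v_3(L)\in\Z$; this integrality is what makes the mod $3$ reduction legitimate. Note also that the hypothesis that $M$ is surgery on a knot in $S^3$ is used precisely for this maneuver, not merely (as in your outline) to evaluate $\lambda(M)$ by Walker's formula.

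Second, the arithmetic endgame is asserted rather than exhibited. In the paper the homological input is the equation $mp-q\ell^2=\varepsilon p$ with $\varepsilon=\pm1$ (where $\ell$ is the linking number of $L$), which together with $p\nmid\ell$ yields $\ell^2=p\ell_0$ with $\ell_0\not\equiv 0\pmod 3$; the final contradiction is the single congruence $\ell_0(q^2+1)\equiv 0 \pmod 3$, impossible because $q^2+1\not\equiv 0\pmod 3$ for every integer $q$. Your proposed ``case analysis on $v_3(d)\in\{0,1,2\}$'' never identifies this mechanism (in particular the role of $q^2+1$, which is where the distance one condition and the reciprocity law for Dedekind sums enter), and there is no reason to believe the cases close without the specific integral structure supplied by the link formula. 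As it stands the proposal is a plausible plan sharing the paper's philosophy, but the two steps that actually make the proof work --- the passage to a framed link in $S^3$ and the resulting integrality of all correction terms --- are absent.
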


Using the results of \cite{mv} that treats null-homologous knots in L-space, Theorem \ref{theorem:main} follows from Theorem \ref{theorem:main2}.

\begin{proof}[Proof of Theorem \ref{theorem:main}]
By Theorem \ref{theorem:main2}, a knot $K$ in $Y$ whose distance one surgery yields $Y$ is null-homologous. However, Moore and Vazquez showed that if there is a distance one surgery between L-space $Y$ and $-Y$ along a null-homologous knot, then $|d(Y,\mathfrak{t}_Y)-d(-Y,\mathfrak{t}_{-Y})| \in \{0,2\}$ \cite[Theorem 3.5]{mv}. Since $d(-Y,\mathfrak{t}_{-Y}) = - d(Y,\mathfrak{t}_Y)$, it implies that $d(Y,\mathfrak{t}_Y)\in \{ 0,\pm 1 \}$. This is a contradiction.
\end{proof}

Recently the distance one surgeries between lens spaces, especially the lens space $L(n,1)$, has been actively studied \cite{lmv,mv,wy,ya}. They are regarded as a generalization of the lens space realization problem solved in \cite{gr}, the problem to determine which lens space can appear as a Dehn surgery on a non-trivial knot in $S^{3}$ and are also motivated from DNA topologies. The main tool of these studies are the Heegaard Floer homology, especially the $d$-invariant and its surgery formulas.

The distance one surgery has an application to band surgeries. 
Let $L$ be an unoriented link in $S^{3}$ and $b:I \times I \rightarrow S^{3}$ be an embedding such that $b(I\times I) \cap L = b(I \times \partial I)$ where $I=[0,1]$.
Then we get a new link $L'=(L - b(I \times \partial I)) \cup b(\partial I \times I)$. We say that $L'$ is obtained from $L$ by a \emph{band surgery} along the band $b$.  A \emph{chirally cosmetic banding} of a knot $K$ is a band surgery that produces its mirror image $\overline{K}$.

By the Montesinos trick, if a knot $K'$ is obtained from a knot $K$ by a band surgery, then there is a distance one surgery between their double branched coverings $\Sigma_2(K')$ and $\Sigma_2(K)$. Thus one can use the non-existence of distance one surgeries to obstruct band surgeries. If an L-space $Y$ is the double branched covering of a quasi-alternating knot $K$, then $d(Y,\mathfrak{t}_Y)=4\sigma(K)$, where $\sigma(K)$ is the signature of $K$ \cite{lo}. Thus Theorem \ref{theorem:main} implies the following.

\begin{corollary}
\label{cor:cosmetic}
Let $K$ be a quasi-alternating knot whose double branched covering $\Sigma_2(K)$ is obtained by a Dehn surgery on a knot in $S^{3}$. If $\det(K) = 9d$ with $d \not \equiv 0 \pmod{3}$ and $\sigma(K)\neq 0, \pm 4$, then $K$ has no chirally cosmetic banding.
\end{corollary}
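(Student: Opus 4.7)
The plan is to assume a chirally cosmetic banding of $K$ exists and derive a contradiction by applying Theorem \ref{theorem:main} to $Y=\Sigma_2(K)$.

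First I would translate the banding hypothesis into a statement about Dehn surgery. By definition, a chirally cosmetic banding is a band surgery from $K$ to its mirror $\overline{K}$, and by the Montesinos trick (recalled immediately above the statement of the corollary) it lifts to a distance one surgery between the double branched covers $\Sigma_2(K)$ and $\Sigma_2(\overline{K})$. Since passing to the mirror reverses the orientation of the branched double cover, $\Sigma_2(\overline{K})=-\Sigma_2(K)$, so setting $Y=\Sigma_2(K)$ I obtain a distance one surgery between $Y$ and $-Y$.

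Next I would verify each hypothesis of Theorem \ref{theorem:main} for this $Y$. That $Y$ is obtained by Dehn surgery on a knot in $S^3$ is one of the standing assumptions of the corollary. Because $K$ is quasi-alternating, $Y$ is an L-space. The order $|H_1(Y;\Z)|$ equals $\det(K)=9d$ with $d\not\equiv 0\pmod{3}$, which matches the arithmetic form $9p_0$ with $p_0\not\equiv 0\pmod{3}$ appearing in Theorem \ref{theorem:main}. Finally, the formula from \cite{lo} expresses $d(Y,\mathfrak{t}_Y)$ as a nonzero constant multiple of $\sigma(K)$, under which the hypothesis $\sigma(K)\neq 0,\pm 4$ is exactly the statement $d(Y,\mathfrak{t}_Y)\neq 0,\pm 1$.

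With every hypothesis in place, Theorem \ref{theorem:main} forbids any distance one surgery between $Y$ and $-Y$, contradicting the one produced from the assumed banding; hence no chirally cosmetic banding of $K$ can exist. I do not anticipate any real obstacle in executing this plan: the corollary is a straightforward assembly of the Montesinos trick, the Lisca--Owens $d$-invariant formula for quasi-alternating knots, and Theorem \ref{theorem:main}, and the only point requiring care is the bookkeeping that aligns the numerical conditions on $\det(K)$ and $\sigma(K)$ with those on $|H_1(Y;\Z)|$ and $d(Y,\mathfrak{t}_Y)$.
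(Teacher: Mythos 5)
Your proposal is correct and follows essentially the same route the paper takes: Montesinos trick to convert the banding into a distance one surgery between $\Sigma_2(K)$ and $\Sigma_2(\overline{K})=-\Sigma_2(K)$, quasi-alternating implies L-space with $|H_1|=\det(K)$, and the Lisca--Owens formula to convert the signature condition into $d(Y,\mathfrak{t}_Y)\neq 0,\pm 1$, so that Theorem \ref{theorem:main} applies. Note that the paper states the Lisca--Owens relation as $d(Y,\mathfrak{t}_Y)=4\sigma(K)$, which is a typo for $d(Y,\mathfrak{t}_Y)=-\sigma(K)/4$; your reading (a constant of absolute value $1/4$, so that $\sigma\neq 0,\pm 4$ is exactly $d\neq 0,\pm 1$) is the correct one.
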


In \cite{li}, by using Casson-Gordon invariants Livingston showed that $(2,k)$ torus knot $T(2,k)$ $(k>1)$ has a chirally cosmetic banding, then $k=5$ or $9$. The chirally cosmetic banding of $T(2,5)$ is known \cite{ze} but it remained unsolved whether $T(2,9)$ has chirally cosmetic banding or not. Corollary \ref{cor:cosmetic} shows that $T(2,9)$ has no chirally cosmetic banding hence we conclude the following, which was first proven in \cite{ya}.

\begin{corollary}\cite[Corollary 1.5]{ya}
\label{cor:torus}
The torus knot $T(2,k)$ $(k>1)$ admits chirally cosmetic banding if and only if $k=5$.
\end{corollary}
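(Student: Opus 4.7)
The plan is to combine Livingston's constraint from \cite{li} with Corollary \ref{cor:cosmetic} applied to the previously unresolved case $k=9$. Using Casson--Gordon invariants, Livingston shows that if $T(2,k)$ ($k>1$) admits a chirally cosmetic banding then $k\in\{5,9\}$. The case $k=5$ is realized explicitly by the banding exhibited in \cite{ze}, so the entire statement reduces to showing that $T(2,9)$ admits no chirally cosmetic banding.

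To rule out $k=9$, I would verify the four hypotheses of Corollary \ref{cor:cosmetic} for $K=T(2,9)$. First, $T(2,9)$ is alternating and therefore quasi-alternating. Second, its double branched cover $\Sigma_2(T(2,9))$ is the lens space $L(9,1)$, realized as $9$-surgery on the unknot in $S^{3}$, so it arises as Dehn surgery on a knot in $S^{3}$. Third, the determinant satisfies $\det T(2,9)=9=9\cdot 1$ with $d=1\not\equiv 0\pmod 3$. Finally, the signature of $T(2,2n+1)$ is $-2n$, so $\sigma(T(2,9))=-8\notin\{0,\pm 4\}$. Corollary \ref{cor:cosmetic} then forbids a chirally cosmetic banding of $T(2,9)$, which completes the ``only if'' direction; the ``if'' direction is \cite{ze}.

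There is no serious obstacle here: each of the four checks is a classical fact about $(2,k)$ torus knots and their branched double covers, and the substantive work has already been absorbed into Theorem \ref{theorem:main2} and Corollary \ref{cor:cosmetic}. It is worth observing in passing that the same strategy cannot reach $k=5$, since $\det T(2,5)=5$ is not divisible by $9$, so the divisibility hypothesis of Corollary \ref{cor:cosmetic} simply fails---consistent with the fact that $T(2,5)$ genuinely admits such a banding.
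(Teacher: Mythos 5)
Your proposal is correct and follows exactly the paper's route: Livingston's Casson--Gordon constraint reduces to $k\in\{5,9\}$, Zekovi\'c's banding handles $k=5$, and Corollary \ref{cor:cosmetic} (with the checks $\det T(2,9)=9$, $\sigma(T(2,9))=-8$, $\Sigma_2(T(2,9))=L(9,1)$ a surgery on the unknot) eliminates $k=9$. Your verification of the hypotheses is accurate; the paper simply states this chain of reasoning more tersely.
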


Our proof still uses the Heegaard Floer homology, results from \cite{mv} based on the surgery formula of the $d$-invariant for a null-homologous knot in L-space \cite{nw}. Nevertheless, our argument allows us to avoid technically hard Heegaard Floer homology arguments developed in \cite{ya}. It is interesting and surprising that the Casson-Walker invariant, one of the most fundamental invariant of 3-manifolds, can be used to solve such a remaining subtle case.

\begin{proof}[Proof of Theorem \ref{theorem:main2}]

Assume to the contrary that there is a non-null-homologous knot $K_0$ in $M$ whose distance one surgery is $-M$.

Since $M$ is the $\frac{p}{q}$-surgery on the knot $K=K_y$, we view $M$ as $M=(S^{3}-N(K_y)) \cup (S^{1}\times D^{2})$ where $N(K_y)$ is the open tubular neighborhood of $K_y$. We put the knot $K_0 \subset M$ so that 
\[ K_0 \subset S^{3}-N(K_y) \subset (S^{3}-N(K_y)) \cup (S^{1}\times D^{2}) =M \]

Let $K_x$ be the knot in $S^{3}$ obtained as the image of $K_0$ under the natural inclusion map $\iota: S^{3}-N(U) \hookrightarrow S^{3}$. Let $\alpha$ be the surgery slope of $K_0$. Since $\iota$ sends the meridian of $K_0$ to the meridian of $K_x$, the image of the slope $\alpha$ is also distance one, namely, the integral slope $m$ for some $m \in \Z$.

Let $L=K_x \cup K_y$ be rationally framed two component link, where $K_x$ has the integral framing $m$ and $K_y$ has the rational framing $\frac{p}{q}$. Let $S^{3}_L$ be the 3-manifold obtained by Dehn surgery along $L$. By the construction, $S^{3}_L$ is equal to $-M$.

Let $\ell$ be the linking number of $L$. Since $|H_1(S^{3}_L;\Z)| =|H_1(M;\Z)|= p$, it follows that 
\begin{equation}
\label{eqn:1sthomology}
mp-q\ell^{2} = \varepsilon p \quad (\varepsilon \in \{\pm 1\}) 
\end{equation} 
Since $p$ and $q$ are coprime this shows that $p \mid \ell^{2}$. On the other hand, since $K_0$ is not null-homologous in $M$, $p \nmid \ell$. 
Since $p=9p_0$ with $p_0 \not \equiv 0 \pmod{3}$, we may write
\begin{equation}
\label{eqn:non-null}
\ell^{2} = p \ell_0 , \quad \ell_0 \not \equiv 0 \pmod{3}
\end{equation} 

Let $\lambda$ be the Casson-Walker invariant of rational homology spheres.
By the Casson-Walker's knot surgery formula \cite{wa}
\[ \lambda(S^{3}_L) = \lambda(-M)=-\lambda(M) = -\frac{q}{p}a_2(K_y)+\frac{1}{2}s(q,p)\]
where $s(q,p)$ denotes the Dedekind sum.
On the other hand, by applying the two component link rational surgery formula \cite[Theorem 1.5]{it-casson} we get
\begin{align*}
\frac{mp-q\ell^{2}}{q} \left( \frac{\lambda(S^{3}_L)}{2}-\frac{\sigma}{8} \right)
& =  \frac{p}{q}a_2(K_x)  -\frac{p}{12q} +\frac{p\ell^2}{24q} + m a_2(K_y) -\frac{m}{24} -\frac{m}{24q^{2}} + \frac{m\ell^2}{24}\\
& \quad + 2v_3(L) + \frac{mp-q\ell^2}{24q}\left(12s(m,1) - m +12s(p,q)-\frac{p}{q} \right)
\end{align*}
Here
\begin{itemize}
\item $\sigma$ is the signature of the linking matrix $\begin{pmatrix}m & \ell \\ \ell & \frac{p}{q} \end{pmatrix} \in \Z$. 
\item $a_2(K_x), a_2(K_y) \in \Z$ is the second coefficient of the Conway polynomial of $K_x$ and $K_y$.
\item $2v_3(L)= -a_3(L)+(a_2(K_x)+a_2(K_y))\ell + \frac{1}{12}(\ell^{3}-\ell) \in \frac{1}{2}\Z$ is an invariant of the link $L$, where $a_3(L)$ is the third coefficient of the the Conway polynomial $\nabla_L(z)$ of the link $L$.
\end{itemize}
Actually these precise definitions are not important in the following argument. We will just use the facts that $\sigma, a_2(K_x), a_2(K_y), 4v_3(L) \in \Z$. 

By the reciprocity low of the Dedekind sums
\[ s(p,q)+s(q,p) = -\frac{1}{4}+ \frac{1}{12}\left( \frac{p}{q} + \frac{q}{p} + \frac{1}{pq}\right) \]
it follows that
\[ 12s(m,1) - m +12s(p,q)-\frac{p}{q} = -m -12s(q,p)+\frac{q}{p}+\frac{1}{pq}-3 \]
Thus by \eqref{eqn:1sthomology} we get
\begin{align*}
&24\varepsilon p \left(-a_2(K_y)\frac{q}{2p}+ \frac{1}{4}s(q,p)-\frac{\sigma}{8} \right) \\
& \quad = 24p a_2(K_x) -2p  + p\ell^2 + 24qma_2(K_y) -m -\frac{m}{q} + mq\ell^{2} + 12q(4v_3(L))\\
& \qquad \quad + \varepsilon p\left(-m -12s(q,p)+\frac{q}{p}+\frac{1}{pq}-3 \right)
\end{align*}
Hence 
\begin{align*}
&-12\varepsilon q a_2(K_y) + 3\varepsilon \left( 6ps(q,p) \right)- 3\varepsilon p\sigma \\
&\  = 24p a_2(K_x) +24qma_2(K_y) -2p  + (\varepsilon-m)\left(q+ \frac{1}{q}\right) + \ell^{2}(mq+p) + 12q(4v_3(L)) - \varepsilon p(1+m) \\
&\ =  24p a_2(K_x)+24qma_2(K_y) -2p  -\ell_0(q^2+1) + \ell^{2}(mq+p) + 12q(4v_3(L)) - \varepsilon p(1+m) \\
\end{align*}
Here at the last equality we use
\[ \varepsilon -m =-\frac{q\ell^{2}}{p}=-q\ell_0 \]
that follows from \eqref{eqn:1sthomology} and \eqref{eqn:non-null}.
Since $6p s(q,p) \in \Z$, by taking equality modulo $3$, we get 
\[ 0 \equiv -\ell_0(q^2+1) \pmod{3}\]
Since $\ell_0, (q^2+1) \not \equiv 0 \pmod{3}$ this is a contradiction.
\end{proof}

\section*{Acknowledgement}
The author is partially supported by JSPS KAKENHI Grant Numbers 19K03490, 21H04428, 	23K03110. The author wishes to express his gratitude to Kazuhiro Ichihara for discussions and comments.

\end{document}